\documentclass[12pt]{article}
\usepackage{fullpage}
\usepackage{amssymb, amsfonts, amsbsy, amsmath,latexsym, color,bbm,amsthm,mathrsfs}
\usepackage{graphicx}
\usepackage{hyperref}

\theoremstyle{definition}
\newtheorem{defn}{Definition}

\newtheorem{thm}[defn]{Theorem}
\newtheorem{prop}[defn]{Proposition}

\newtheorem{conj}[defn]{Conjecture}
\newtheorem{open}[defn]{Open Problem}

\newcommand{\cZ}{\mathcal{Z}}
\newcommand{\bR}{\mathbb{R}}

\newcommand{\bP}{\mathbb{P}}
\newcommand{\bZ}{\mathbb{Z}}
\newcommand{\bC}{\mathbb{C}}
\newcommand{\bF}{\mathbb{F}}
\newcommand{\bN}{\mathbb{N}}

\newcommand{\norm}[1]{\left\Vert#1\right\Vert}
\newcommand{\ip}[2]{\left<#1,#2\right>}
\newcommand{\absip}[2]{\left\vert\left<#1,#2\right>\right\vert}

\DeclareMathOperator*{\argmin}{\operatorname{arg\,min}}

\newcommand{\TF}{\operatorname{TF}}
\newcommand{\UN}{\operatorname{UN}}
\newcommand{\FUNTF}{\operatorname{FUNTF}}

\newcommand{\beq}{\begin{equation}}
\newcommand{\eeq}{\end{equation}}

\begin{document}

\title{Game of Sloanes:\\Best known packings in complex projective space}

\author{John Jasper\textsuperscript{a}, Emily J.~King\textsuperscript{b,c,$\star$}, and Dustin G.~Mixon\textsuperscript{d}\\
\textsuperscript{a} South Dakota State University, Brookings, SD, USA;\\
\textsuperscript{b} University of Bremen, Bremen, Germany;\\
\textsuperscript{c} Colorado State University, Fort Collins, CO, USA;\\
\textsuperscript{d} The Ohio State University, Columbus, OH, USA;\\
\textsuperscript{$\star$} corresponding author king@math.uni-bremen.de}


\maketitle

\begin{abstract}
It is often of interest to identify a given number of points in projective space such that the minimum distance between any two points is as large as possible.
Such configurations yield representations of data that are optimally robust to noise and erasures. 
The minimum distance of an optimal configuration not only depends on the number of points and the dimension of the projective space, but also on whether the space is real or complex.
For decades, Neil Sloane's online Table of Grassmannian Packings has been the go-to resource for putatively or provably optimal packings of points in real projective spaces.
Using a variety of numerical algorithms, we have created a similar table for complex projective spaces.
This paper surveys the relevant literature, explains some of the methods used to generate the table, presents some new putatively optimal packings, and invites the reader to competitively contribute improvements to this table.

\textbf{Keywords}: finite frames, Grassmannian frames, complex projective packings, coherence, maximal frame correlation, chordal distance, spherical codes, optimization
\end{abstract}

\section{Introduction}
Consider the problem of packing $n$ lines through the origin of $\bF^d$ such that they are maximally geometrically spread.
This problem has a number of applications in fields such as compressed sensing~\cite{BFMW13}, digital fingerprinting~\cite{MQKF13}, quantum state tomography~\cite{RBkSC04,FHS17}, and multiple description coding~\cite{StH03,MeDa14,Welch}.
Such configurations are also a cornerstone of discrete geometry~\cite{Toth65}.
It should not be surprising then that a number of results concerning these configurations have been independently rediscovered in a number of fields.
A closely related problem was initially proposed by a Dutch botanist almost one hundred years ago concerning the geometry of the surface of pollen~\cite{Tam30}.
Neil Sloane  and his coauthors indicated in their seminal paper~\cite{GrassPack} that their interest in optimal line packing started with a 1992 newsgroup post from an oncologist named Julian Rosenmann.
Namely, the doctor wanted to know the best way to separate laser beams going through a tumor.
These line configurations are often called Grassmannian frames since they form an optimal packing of the Grassmannian of lines.
Grassmannian frames have also been shown to be optimally robust to erasures~\cite{StH03,Bod07}, meaning that if one encodes data using a Grassmannian frame and then loses a coefficient, the reconstruction error will be the smallest amongst all systems of the same size.

For decades, Neil Sloane has hosted a website~\cite{Sloane} with the best known packings of lines (and higher-dimensional subspaces) in Euclidean space.
The main purpose of this paper is to announce a similar website~\cite{GameofSloanes} devoted to packing lines in complex space.
Users of the website will be able to download best-known packings as well as some of the Matlab code we used to discover the packings.
We also welcome the reader to join our so-called \textit{Game of Sloanes} by submitting packings that improve upon our putatively optimal packings.
We hope that by gamifying the search for optimal line packings, the community can eventually identify exact expressions for putatively optimal packings and even prove the optimality of these packings.

A second goal of this paper is to present a short survey of results concerning the problem of packing lines in $\bF^d$.
We begin in Section~\ref{sec:frame} by introducing the basics of frame theory.
Indeed, it is particularly convenient to express lines in terms of vectors, and it is also convenient to discuss these vectors in the language of frame theory.
In Section~\ref{sec:grass}, we explicitly define what is meant by ``maximally geometrically spread,'' and we identify various frame properties that such configurations necessarily exhibit.
Section~\ref{sec:opt} contains a number of bounds on how geometrically spread a given number of lines can be in a particular space.
Any packing which achieves equality in one of these bounds is necessarily an optimal packing, and this is the most common way to demonstrate optimality.
Still, there are other methods to show optimality, and we will discuss them as well.
Finally, in Section~\ref{sec:games}, we introduce the website~\cite{GameofSloanes}, we isolate an open problem, and we pose two conjectures.



\section{Frame Theory Basics}\label{sec:frame}
Throughout, $[n]$ denotes the set $\{1, \dots, n\}$, whereas $\bF$ can be taken to be either $\bR$ or $\bC$.
By slight abuse of notation, $\Phi$ will denote both a sequence $\left(\varphi_j\right)_{j=1}^n$ in $\bF^d$ and the $d \times n$ matrix whose $j$th column is $\varphi_j$ (the intended interpretation will be clear from context).
Further, we write $\Phi \in \UN(d,n,\bF)$ if $\norm{\varphi_j} =1$ for all $j \in [n]$.

Frames provide a fruitful generalization of orthonormal bases.
At the weakest level, they satisfy a loosening of Parseval's equality, but there are special classes of frames that behave even more like orthonormal bases.

\begin{defn}
Let $\Phi = \left(\varphi_j\right)_{j=1}^n \subset \bF^d$.  Then $\Phi$ is a \emph{frame} for $\bF^d$ if there exist \emph{frame bounds} $0  < A \leq B$ such that for every $x \in \bF^d$, it holds that
\beq\label{eq:framebd}
A \norm{x}^2 \leq \sum_{j=1}^n \absip{x}{\varphi_j}^2 \leq B \norm{x}^2.
\eeq
If one may select $A = B$ in~\eqref{eq:framebd}, then we call $\Phi$ a \emph{tight frame} and we write $\Phi \in \TF(d,n,\bF)$.
Finally, if $\Phi \in \UN(d,n,\bF) \cap \TF(d,n,\bF)$, we write $\Phi \in \FUNTF(d,n,\bF)$ and call $\Phi$ a \emph{finite unit norm tight frame (FUNTF)}.
\end{defn}

While general frames are interesting in infinite dimensional Hilbert spaces, in the finite dimensions we consider in this paper, a frame is simply a spanning set.
In particular, $n \geq d$ must hold in order for $\Phi$ to be a frame.
When a frame is tight, it enjoys a nice change-of-basis-type formula.
Namely, if $\Phi \in \TF(d,n\bF)$ with frame bound $A$, then for all $x \in \bF^d$, it holds that
\beq\label{eq:tight}
x = \sum_{j=1}^n \frac{1}{A} \ip{x}{\varphi_j} \varphi_j.
\eeq

Unit vectors may be thought of as points on a sphere, and some applications are concerned with the minimum geodesic distance of a given finite collection of points on the sphere~\cite{delsarte1977,musin2009}.

\begin{defn}
Let $S^{d-1}$ denote the unit sphere in $\mathbb{R}^d$.
For any $x,y\in S^{d-1}$, the \emph{spherical distance} between them is $\arccos(\ip{x}{y})$.
Every $\Phi =\left(\varphi_j\right)_{j=1}^n \in \UN(d,n,\bR)$ is called a \emph{spherical code}. 
If the set of inner products $\{ \ip{\varphi_j}{\varphi_k}\, : \, j \neq k\}$ only has two elements $\alpha$, $\beta$, we call $\Phi$ a \emph{spherical two-distance set}.
A spherical two-distance set with $\beta^2 = \alpha^2$ is called \emph{equiangular}.  More generally, a set  $\left(\varphi_j\right)_{j=1}^n \in \UN(d,n,\bF)$ is called equiangular if $\absip{\varphi_j}{\varphi_k}$ is constant for all $j \neq k$.
\end{defn}

\section{Grassmannian Frames}\label{sec:grass}

Orthonormal bases have the property that the lines they span are as geometrically spread apart as possible.
We commonly use \textit{coherence} to quantify this geometric spread.

\begin{defn}
Let $\Phi = \left(\varphi_j\right)_{j=1}^n \in \UN(d,n,\bF)$. The \emph{coherence} $\mu$ of $\Phi$ is defined to be
\[
\mu\left(\Phi\right) = \max_{1\leq  j < k\leq n} \absip{\varphi_j}{\varphi_k}.
\]
The \emph{angle set} of $\Phi$  is $\{\absip{\varphi_j}{\varphi_k}^2\,:\, j,k \in [n], \,j\neq k \}$.

We call $\Phi$ a \emph{Grassmannian frame} if
\beq\label{eq:Grassman}
\Phi \in \argmin\left\{ \mu(\Psi) \, : \, \Psi \in \UN(d,n,\bF) \right\}.
\eeq
\end{defn}

Coherence is also called \emph{maximal frame correlation} (cf.~\cite{StH03, BKo06} and contemporaneous publications) and more generally categorized as \emph{geometric spread} (as opposed to algebraic spread)~\cite{King15}.
If $n \leq d$, then there exist sets of $n$ orthonormal vectors in $\bF^d$, which have coherence $0$.
Thus the Grassmannian frames for $n \leq d$ are necessarily orthonormal vectors.
(Of course, these vectors fail to span when $n<d$.
While such packings are not \textit{frames}, we still call them \textit{Grassmannian frames} since they minimize coherence.
For the case where $n\geq d$, this choice of nomenclature is justified in Proposition~\ref{prop.grass frames are frames} below.)
It follows from a compactness argument that Grassmannian frames exist for all choices of $d$, $n$, and $\bF$.

The above definition is restricted to unit vectors for two reasons. 
Initially, using equal-norm vectors in applications ensures that no one vector is weighted more than any others.  
Further, from a more theoretical viewpoint, we are concerned with the lines spanned by the vectors.  
Thus, each of the unit vectors forms orthonormal basis for the line it spans.  
This viewpoint also explains the appearance of the absolute value in the definition of coherence; one may sign/phase any unit vector to get a different orthonormal basis for the same line.  
In particular, $\absip{\varphi_j}{\varphi_k}$ yields the cosine of the interior angle between the lines spanned by $\varphi_j$ and $\varphi_k$. 
A Grassmannian frame is thus a set of $n$ vectors which span lines in $\bF^d$ which have interior angles that are as large as possible given $d$, $n$, and $\bF$.  
In the real case, $\ip{\varphi_j}{\varphi_k}$ returns the cosine of the spherical distance between $\varphi_j,\varphi_k\in S^{d-1}$.
Solving the \emph{Tammes problem} (also known as \emph{Toth's problem}) amounts to minimizing the maximum of $\ip{\varphi_j}{\varphi_k}$ over $\UN(d,n,\bR)$~\cite{Tam30,Toth49}, with special attention paid to the classical case where $d=3$; note the subtle distinction from minimizing coherence.
Such configurations are also called \emph{optimal spherical codes} or \emph{spherical packings} (see, e.g,~\cite{ConwaySloane98}). 

Minimizing the coherence of vectors is equivalent to maximizing the \emph{chordal distance} between points in the Grassmannian of lines in $\bF^d$, hence the name, which seems to have first appeared in~\cite{StH03}.  
Since the Grassmannian of lines in $\bF^d$ is equivalent to projective space $\bF\bP^{d-1}$, Grassmannian frames are sometimes called \emph{(optimal) projective packings} (see, e.g.~\cite{FJM18}) or \emph{optimal projective codes} (see, e.g.~\cite{Glaz19}).  
In the mid-twentieth century, authors often referred to the distribution of unit vectors relative to the absolute value of their inner product as a problem in \emph{elliptic geometry} (see, e.g.,~\cite{Toth65,VaSei66}). 
Finally, one may view the problem of optimizing the coherence as maximizing the spherical distance between points in \emph{antipodal codes}, which are spherical codes in which the antipode of every point in the code must also reside in the code.

We have not yet shown that a Grassmannian frame is indeed a frame.  
In what follows, we slightly modify the proof from~\cite{FJM18} that treats the real case.
A similar, non-rigorous argument may be found in~\cite{HaCa17}.

\begin{prop} 
\label{prop.grass frames are frames}
If $\Phi = (\varphi_j)_{j=1}^n \subset \bF^d$ satisfies~\eqref{eq:Grassman} and $n \geq d$, then it is a frame.
\end{prop}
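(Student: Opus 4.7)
I'd prove the contrapositive: any $\Phi \in \UN(d,n,\bF)$ with $n \geq d$ that fails to span $\bF^d$ cannot satisfy~\eqref{eq:Grassman}, by producing $\Psi \in \UN(d,n,\bF)$ with $\mu(\Psi) < \mu(\Phi)$. Set $V = \operatorname{span}(\Phi)$ and $r = \dim V$; by assumption $r < d$. Because $n \geq d > r$, no $n$ orthonormal vectors fit in $V$, so $\mu := \mu(\Phi) > 0$. Also $\mu < 1$: any $n$ unit vectors in a $2$-dimensional subspace of $\bF^d$ (which exists since $d \geq r+1 \geq 2$) can be arranged with coherence strictly below $1$, and $\Phi$ must do at least as well.

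The construction tilts each $\varphi_j$ slightly into $V^\perp$. Fix any unit $u \in V^\perp$ and, for parameters $\beta = (\beta_j) \in \bF^n$ and small $t > 0$, set
$$
\psi_j(t) := \frac{\varphi_j + t\beta_j u}{\sqrt{1 + t^2 |\beta_j|^2}} \in \UN(d,n,\bF).
$$
Using $\varphi_j \perp u$ and $\|u\| = 1$, a direct Taylor expansion gives
$$
|\langle \psi_j(t), \psi_k(t)\rangle|^2 = |\langle \varphi_j, \varphi_k\rangle|^2 + t^2 \Delta_{jk}(\beta) + O(t^4),
$$
where $\Delta_{jk}(\beta) = 2\operatorname{Re}\bigl(\overline{\langle \varphi_j, \varphi_k\rangle}\,\overline{\beta_j}\beta_k\bigr) - |\langle \varphi_j, \varphi_k\rangle|^2\bigl(|\beta_j|^2 + |\beta_k|^2\bigr)$. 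Pairs with $|\langle \varphi_j, \varphi_k\rangle| < \mu$ remain below $\mu$ for small $t$ by continuity, so the task reduces to choosing $\beta$ with $\Delta_{jk}(\beta) < 0$ for every \emph{active} pair, i.e., every $\{j,k\}$ realizing $|\langle \varphi_j, \varphi_k\rangle| = \mu$.

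For each active pair, writing $\langle \varphi_j, \varphi_k\rangle = \mu e^{i\phi_{jk}}$, the inequality $\Delta_{jk}(\beta) < 0$ becomes $2\operatorname{Re}(e^{-i\phi_{jk}}\,\overline{\beta_j}\beta_k) < \mu(|\beta_j|^2 + |\beta_k|^2)$. The governing Hermitian form on $(\beta_j, \beta_k) \in \bF^2$ has eigenvalues $\mu \pm 1$ and is therefore indefinite for $\mu < 1$, so its "good" region (where strict inequality holds) is a nonempty open cone in $\bF^2$. With a $\beta \in \bF^n$ lying in every active-pair good region, taking $t_0 > 0$ small enough forces $\mu(\Psi(t_0)) < \mu$, the desired contradiction. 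The main obstacle is assembling such a $\beta$ globally: the active-pair graph can contain cycles imposing nontrivial phase-compatibility on the $\beta_j$'s. The plan to overcome this is to exploit both phase and magnitude freedom in $\beta$ — build $\beta$ inductively along a spanning forest of the active-pair graph (choosing each new coordinate inside the open good cone determined by its already-fixed neighbor), then show the cycle-closing edges can be satisfied by a small generic perturbation of the already-chosen coordinates. Each bad set depends on only two coordinates, and the continuous freedom of the unperturbed coordinates is ample to avoid the finitely many cycle-edge bad sets. This is the heart of the argument and the only place where $\bF = \bR$ versus $\bF = \bC$ matters; the complex case has strictly more freedom, so extending the $\bF = \bR$ argument of~\cite{FJM18} presents no new difficulty.
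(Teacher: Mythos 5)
Your reduction is set up correctly (perturb into $V^\perp$, expand $\absip{\psi_j(t)}{\psi_k(t)}^2$ to order $t^2$, note that only the active pairs need a strictly negative coefficient $\Delta_{jk}$), but the proof has a genuine gap at exactly the step you call ``the heart of the argument'': the global construction of $\beta$. The plan you sketch --- satisfy the tree edges of a spanning forest greedily, then fix the cycle-closing edges ``by a small generic perturbation'' --- fails as stated, because the bad set of a cycle edge is not something genericity avoids: since the governing form is indefinite (eigenvalues proportional to $\mu\pm1$), the region where $\Delta_{jk}(\beta)\ge0$ is a cone with \emph{nonempty interior}. If the forest stage leaves a cycle edge strictly inside its bad cone, no small perturbation rescues it, and a large one can break the tree edges already satisfied; ``avoiding finitely many bad sets'' only works for nowhere-dense sets. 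The good news is that your approach is salvageable by a much blunter choice of $\beta$ that needs no phase or sign bookkeeping at all: take $\abs{\beta_j}=M^j$ with $M>2/\mu$ (phases arbitrary). Then for every active pair $j\ne k$,
\[
2\operatorname{Re}\bigl(\overline{\ip{\varphi_j}{\varphi_k}}\,\overline{\beta_j}\beta_k\bigr)\le 2\mu M^{j+k}<\mu^2 M^{2\max(j,k)}\le \absip{\varphi_j}{\varphi_k}^2\bigl(\abs{\beta_j}^2+\abs{\beta_k}^2\bigr),
\]
so $\Delta_{jk}(\beta)<0$ uniformly over all active pairs regardless of the structure of the active graph, and your Taylor argument then closes for small $t$.

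For comparison, the paper avoids the global construction entirely by using the non-spanning hypothesis more locally. Since no subcollection of $\Phi$ spans, for each $j$ the active neighbors $(\varphi_k)_{k\in N(j;\Phi)}$ (those with $\absip{\varphi_j}{\varphi_k}=\mu$) also fail to span, so one may choose the perturbation direction $v_j$ orthogonal to \emph{those particular vectors} rather than to all of $\Phi$. Then for active $k$ the numerator $\absip{\varphi_k}{\varphi_j+tv_j}=\absip{\varphi_k}{\varphi_j}=\mu$ is literally unchanged while the normalization satisfies $\norm{\varphi_j+tv_j}\ge\sqrt{1+t^2}>1$, so the quotient drops strictly below $\mu$; inactive pairs are handled by continuity, and the vectors are repaired one at a time while updating the active sets. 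This eliminates the Taylor expansion and all of the combinatorics at the cost of one extra observation about where the orthogonal direction should live --- a trade worth making, and the one place where your proposal, which only uses non-spanning to manufacture the single common direction $u$, leaves value on the table.
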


\begin{proof}
We must show that $\Phi$ is a spanning set for $\bF^d$.  
If $n = d$, then any $\Phi$ which satisfies~\eqref{eq:Grassman} is an orthonormal basis and trivially spans $\bF^d$.  
Hence we may assume without loss of generality that $n > d$ and $\mu=\mu(\Phi) > 0$. 

We prove the claim by way of contradiction and assume that $\Phi$ is not spanning.  
For each $j \in [n]$, we define 
\[
N(j; \Phi) = \left\{ k \in [n]: \absip{\varphi_j}{\varphi_k} = \mu \right\}.
\]
Since we assumed that $\Phi$ is not spanning, there is no $j \in [n]$ such that $(\varphi_k)_{k \in N(j; \Phi)}$ spans $\bF^d$. 
We will now modify each $\varphi_j$, starting with $j=1$ and iterating incrementally. 
Let $v_j$ be a unit vector in the orthogonal complement of $(\varphi_k)_{k \in N(j; \Phi)}$. 
We define for each $t \in (-1,1)$ the quantity
\[
\psi_j(t) = \frac{\varphi_j + t v_j}{\norm{\varphi_j + t v_j}}.
\]
We would like to show that there is a $t$ such that $\absip{\varphi_k}{\psi_j(t)}< \mu$ for all $k \in [n] \setminus \{j\}$ and then replace $\varphi_j$ with the corresponding $\psi_j(t)$.  
We update the sets $N(k; \Phi)$ for all applicable $k$ after replacing $\varphi_j$.  
We see that after the $j$th iteration, $N(j; \Phi)$ becomes the empty set and $j$ is removed from $N(k; \Phi)$ for any $k < j$.

We have not yet shown that such a $t$ even exists. Since $t \mapsto \psi_j(t)$ is continuous for $t \in (-1,1)$, for any sufficiently small $t$ and $k \notin N(j; \Phi)$, $\absip{\varphi_k}{\psi_j(t)}< \mu$ still holds. Let $t \neq 0$ be such that $\operatorname{Re}(t \ip{\varphi_j}{v_j}) \geq 0$.  Then
\[
\norm{\varphi_j + t v_j}^2 = \norm{\varphi_j}^2 + 2\operatorname{Re}(t\ip{\varphi_j}{v_j}) + t^2 \norm{v_j}^2 = 1 +2\operatorname{Re}(t\ip{\varphi_j}{v_j}) + t^2  \geq 1 + t^2.
\]
If $k \in N(j; \Phi)$, then for any $t \neq 0$ satisfying $\operatorname{Re}(t \ip{\varphi_j}{v_j}) \geq 0$, it holds that
\[
\absip{\varphi_k}{\psi_j(t)} = \frac{\absip{\varphi_k}{\varphi_j + tv_j} }{\norm{\varphi_j + t v_j}} =  \frac{\absip{\varphi_k}{\varphi_j} }{\norm{\varphi_j + t v_j}} \leq \frac{\mu}{\sqrt{1+t^2}} < \mu,
\]
as desired.
\end{proof}

Note that we actually proved a stronger statement concerning the structure of a Grassmannian frame:
If $\Phi = (\varphi_j)_{j=1}^n \subset \bF^d$ is a Grassmannian frame, then there exists a $j \in [n]$ such that $\bF^d$ is spanned by the vectors $\varphi_k$ satisfying $\absip{\varphi_k}{\varphi_j} = \mu(\Phi)$.

\section{Proving Optimality of a Packing}\label{sec:opt}
Given $\Phi \in \UN(d,n,\bF)$, the easiest way to demonstrate that $\Phi$ is a Grassmannian frame is by showing that $\mu(\Phi)$ achieves equality in one of four popular lower bounds.  
In order to formulate these bounds, we require the following definition. 

\begin{defn}
Fix $d$ and $\bF$. 
\emph{Gerzon's bound} $\cZ(d,\bF)$ is defined as
\[
 \cZ(d,\bF) = \left\{\begin{array}{cl}d^2 & \text{if } \bF = \bC \\  \frac{d(d+1)}{2} & \text{if } \bF = \bR. \end{array} \right.
\]
\end{defn}

\begin{thm} \label{thm:bounds}
Fix $d>1$, select $\Phi \in \UN(d,n,\bF)$, and set $\mu = \mu(\Phi)$.
Define $2m = \dim_\bR \bF$.
Then
\begin{align}
\label{eq:BC}
\mu(\Phi) 
&\geq  \frac{\cZ(n-d, \bF)}{n\left(1+m(n-d-1)\sqrt{m^{-1}+n-d}\right)-\cZ(n-d,\bF)}
&& \textrm{if $n > d$,}\\
\label{eq:Welch}
\mu(\Phi)
&\geq \sqrt{\frac{n-d}{d(n-1)}}
&& \textrm{if $n > d$,}\\
\label{eq:orth}
\mu(\Phi) 
&\geq \frac{1}{\sqrt{d}}
&&\textrm{if $n > \cZ(d,\bF)$,}\\
\label{eq:Lev}
\mu(\Phi) 
&\geq \sqrt{\frac{n(m+1)-d(md+1)}{(n-d)(md+1)}}
&& \textrm{if $n>\cZ(d,\bF)$.}
\end{align}
Equality holds in~\eqref{eq:Welch} if and only if $\Phi$ is an equiangular tight frame, which requires $n \leq \min\{\cZ(d,\bF), \cZ(n-d,\bF)\}$. 
Equality in~\eqref{eq:orth} requires $n \leq 2(\cZ(d,\bF)-1)$, and equality in~\eqref{eq:Lev} requires that $\Phi$ be tight with angle set $\{0,\mu^2\}$.
\end{thm}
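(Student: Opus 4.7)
The plan is to prove the four inequalities separately, since each rests on a distinct technique, and then address the equality cases. I would begin with the Welch bound \eqref{eq:Welch}. Consider the Gram matrix $G = \Phi^*\Phi$, whose nonzero eigenvalues coincide with those of the frame operator $S = \Phi\Phi^*$, a matrix of rank at most $d$. Applying Cauchy-Schwarz to the nonzero eigenvalues of $S$ gives $\|S\|_F^2 \geq (\tr S)^2/d = n^2/d$. Splitting the Frobenius norm of $G$ into its unit diagonal contribution $n$ and its off-diagonal part (bounded above by $n(n-1)\mu^2$) yields
\[
\frac{n^2}{d} \leq \|S\|_F^2 = \|G\|_F^2 \leq n + n(n-1)\mu^2,
\]
and rearranging produces \eqref{eq:Welch}. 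Equality forces Cauchy-Schwarz equality on the spectrum of $S$, so $S$ is a positive multiple of $I_d$ (tightness), and forces every off-diagonal $\absip{\varphi_j}{\varphi_k}$ to equal $\mu$ (equiangularity). Gerzon's bound applied to $\Phi$ and to its Naimark complement then imposes $n \leq \min\{\cZ(d,\bF),\cZ(n-d,\bF)\}$.

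For the orthoplex bound \eqref{eq:orth} I would embed $\Phi$ into the real inner-product space $\cH$ of self-adjoint $d\times d$ matrices over $\bF$ equipped with the Hilbert-Schmidt product, a space of real dimension $\cZ(d,\bF)$. Set $Q_j = \varphi_j\varphi_j^* - I_d/d$, so that the $Q_j$ live in the $(\cZ(d,\bF)-1)$-dimensional traceless subspace, share common squared norm $1-1/d$, and satisfy $\langle Q_j, Q_k\rangle = \absip{\varphi_j}{\varphi_k}^2 - 1/d$. If $\mu < 1/\sqrt{d}$, these inner products are all strictly negative, and the classical simplex bound (at most $D+1$ vectors in $\bR^D$ can have pairwise negative inner products) forces $n \leq \cZ(d,\bF)$, contradicting the hypothesis. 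For equality in \eqref{eq:orth}, the $\langle Q_j, Q_k\rangle$ are all nonpositive, and the cross-polytope/orthoplex bound ($n \leq 2D$ vectors in $\bR^D$ with pairwise nonpositive inner products) caps the count at $2(\cZ(d,\bF)-1)$.

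For the Bukh-Cox bound \eqref{eq:BC} and the Levenshtein bound \eqref{eq:Lev} I would follow the original arguments. Bukh-Cox is obtained by passing to the Naimark complement $\Psi \in \UN(n-d,n,\bF)$, whose off-diagonal Gram entries agree in modulus with those of $\Phi$, and applying a refined spectral argument that invokes Gerzon's bound in the complementary dimension $n-d$ to constrain the number of off-diagonal entries achieving the maximum. For Levenshtein, the plan is to employ the Delsarte-Yudin linear programming method on the two-point homogeneous space $\bF\bP^{d-1}$: construct an explicit positive-definite kernel (built from Jacobi polynomials of degree at most two) that is nonpositive on $[0,\mu^2]$, so that summing its values on $\Phi$ yields \eqref{eq:Lev}. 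Equality then pins the kernel to vanish on the angle set of $\Phi$, forcing that set to be $\{0,\mu^2\}$ and forcing tightness via the Parseval identity encoded in the positivity constant.

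The main obstacle is the Levenshtein bound: identifying the correct positive-definite kernel on $\bF\bP^{d-1}$ and extracting its equality conditions requires genuine harmonic analysis on a compact symmetric space, a much heavier tool than anything used in the Welch or orthoplex arguments. The Bukh-Cox bound is also more delicate than the first two, since its Naimark-complement step must be paired with a combinatorial lemma governing the multiplicity of pairs at maximal inner product. By contrast, both the Welch and orthoplex arguments reduce to a single well-chosen inner-product-space computation once the right embedding is in hand.
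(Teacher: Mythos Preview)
The paper does not actually prove Theorem~\ref{thm:bounds}; it is presented as a survey result, and the paragraphs following the statement recount the history of each bound and point to the original references (Rankin, Welch, Conway--Hardin--Sloane, Leven\v{s}te\u{\i}n, Bukh--Cox, etc.) rather than supply arguments. So there is no ``paper's own proof'' to compare against beyond those citations.

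Your sketches for \eqref{eq:Welch} and \eqref{eq:orth} are correct and standard: the Gram-matrix/Frobenius-norm computation is one of the classical routes to the Welch--Rankin bound, and the embedding $\varphi_j\mapsto\varphi_j\varphi_j^*-I_d/d$ into traceless Hermitian matrices is precisely the Conway--Hardin--Sloane device the paper cites from~\cite{GrassPack} for both the simplex and orthoplex bounds. Your Levenstein plan via the Delsarte linear program with a degree-two positive-definite kernel is likewise the accepted approach and matches what the paper attributes to~\cite{Lev82,HHM17}, including the equality analysis forcing the angle set $\{0,\mu^2\}$ and tightness.

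The Bukh--Cox paragraph is the weak point. Passing to the Naimark complement explains why $\cZ(n-d,\bF)$ appears in the formula, but ``a refined spectral argument that invokes Gerzon's bound \ldots\ to constrain the number of off-diagonal entries achieving the maximum'' is not a proof strategy so much as a paraphrase of the answer. Neither of the two known proofs proceeds that way: the original~\cite{BuCo18} bounds the first moment of an isotropic measure on the sphere, and the alternative in~\cite{MMP19,Glaz19} runs Delsarte's linear program with a specific test function. If you intend to write this part out, you should commit to one of those two routes; the vague spectral/combinatorial outline you give does not obviously lead to \eqref{eq:BC}, and there is no ``combinatorial lemma governing the multiplicity of pairs at maximal inner product'' in either published argument.
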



\begin{figure}[ht!]
\centering
\includegraphics[width=0.8\textwidth]{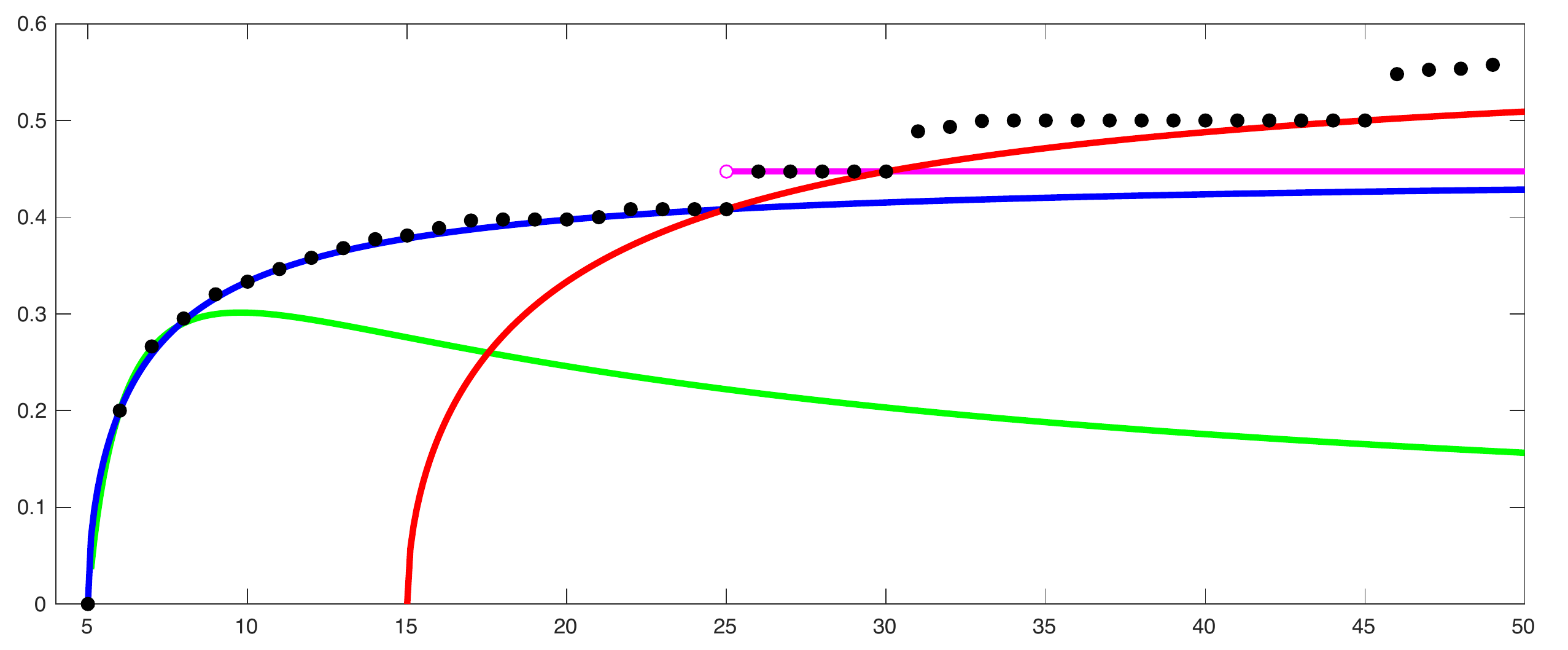}
\caption{The various bounds from Theorem~\ref{thm:bounds} in $\bC^5$ are visualized by smoothed colored lines.  
The best known packings~\cite{GameofSloanes} for each $n$ are denoted by black dots.  
The Bukh--Cox bound~\eqref{eq:BC} is in green, the Welch--Rankin bound~\eqref{eq:Welch} in blue, the orthoplex bound~\eqref{eq:orth} in pink, and the Levenstein bound~\eqref{eq:Lev} in red.}
\label{fig:bounds}
\end{figure}

See Figure~\ref{fig:bounds} for an illustration of Theorem~\ref{thm:bounds} in the case where $d=5$ and $\bF=\bC$.
The bound in~\eqref{eq:Welch} has a long history of being rediscovered in a number of contexts.  
It is known as the \emph{Welch bound}, the \emph{Welch--Rankin bound}, the \emph{first Rankin bound}, the \emph{simplex bound}, and the \emph{first Levenstein bound}. 
In 1955, Rankin proved certain bounds on spherical packings~\cite{Ran55}.  
The authors in~\cite{GrassPack} embed projections onto subspaces into a sphere in $\bF^{\cZ(d,\bF)-1}$ and then apply the results of Rankin to obtain both~\eqref{eq:Welch} and~\eqref{eq:orth}. The frame $\Phi$ saturates the former bound precisely when the embedding yields a simplex, and the latter bound similarly corresponds to an orthoplex.  
For this reason, the bound in~\eqref{eq:orth} is often called the \emph{orthoplex bound} or the \emph{second Rankin bound}.  
The results in~\cite{GrassPack} specifically concern packings in Euclidean space but generalize to complex space with almost no adjustment.  
We note that the approach of~\cite{GrassPack} allows one to prove variants of~\eqref{eq:Welch} and~\eqref{eq:orth} for subspaces of dimension higher than $1$.  
Another early proof of the Welch--Rankin bound appeared in the literature on equiangular lines in real Euclidean space, like the proof in~\cite{VaSei66}, which concerns eigenvalues of so-called \emph{Seidel adjacency matrices}.  
Again, the proof generalizes trivially to the complex case.  

Welch was a engineer who proved~\eqref{eq:Welch} for an application to multichannel communications.  
He actually proved a slightly more general bound, namely that for all $t \in \bN$, it holds that
\[
\mu(\Phi)^{2t} \geq \frac{1}{n-1} \left( \frac{n}{\binom{d+t-1}{t}} - 1\right).
\]
This bound is dominated by the bounds listed in Theorem~\ref{thm:bounds}.  
For $t>1$, related bounds involving the sum of the $2t$ powers of all the absolute values of the inner products end up being useful in applications, but not for the purposes of this paper (see, e.g.,~\cite{Wal17}).  
We also note that the Welch--Rankin bound was independently proven in a lighthearted article written in homage to the Gram matrix~\cite{Ros97}.  
The first appearance of the Welch--Rankin bound in the frame literature was likely~\cite{StH03}, where the authors noted that saturating the bound happens if and only if one has an equiangular tight frame.  
The Welch--Rankin bound may also be proven using linear programming techniques (see, e.g.,~\cite{HHM17}).  
Finally, we point out a new proof using a certain $\ell^1$--$\ell^\infty$ duality that appears in two recent papers~\cite{MMP19,Glaz19}.

Most known examples of frames that achieve equality in the Welch--Rankin bound arise from combinatorial designs. 
A table of these equiangular tight frames, as well as a review of many of the known constructions, may be found in~\cite{FM15}. 
As an example, one may always construct an equiangular tight frame of $d+1$ vectors in $\bF^d$ called a \emph{simplex} by centering a regular simplex at the origin, scaling the simplex so that its vertices reside in the unit sphere, and then collecting the resulting vertices as vectors in the frame.

The fact that the Welch--Rankin bound can only be saturated if $n \leq \cZ(d,\bF)$ was first noted by the authors of~\cite{LS1973}, where they attributed the bound for Euclidean space to correspondence with Gerzon.   
As above, the proof of the bound for the complex case is nearly identical to the original arguments. 
The fact that $n \leq \cZ(n-d,\bF)$ must hold comes from considering the so-called \emph{Naimark complement} of the equiangular tight frame, which must be an equiangular tight frame for $\bF^{n-d}$ (see, e.g.~\cite{STDH07}). 
There is an open conjecture, \emph{Zauner's conjecture}, in quantum information theory that there are always $d^2 = \cZ(d,\bC)$ vectors in $\bC^d$ which saturate the Welch--Rankin bound~\cite{Zauner1999,Zauner2011}.  
This is a deep problem for which solutions in only a finite number of dimensions have been found (see, e.g., \cite{FHS17,RBkSC04}). 
As a side note, there are also only a finite number of dimensions $d$ for which an equiangular tight frame of $\cZ(d,\bR)$ vectors in $\bR^d$ is known to exist; however, in contrast to the complex case, these are conjectured to be the only such configurations~\cite{Gill18}.

The bound in~\eqref{eq:BC} is called the \emph{Bukh--Cox} bound. 
It was recently proved in~\cite{BuCo18} by bounding the first moment of isotropic measures.  
An alternate proof, which applies Delsarte's linear program~\cite{delsarte1977} subsequently appeared in~\cite{MMP19,Glaz19}.  
Although the Bukh--Cox bound holds for all $n>d$, it is only the best bound on coherence in a small regime between $d+1$ and $d + O(\sqrt{d})$, as can be seen in the case $d=5$ and $\bF=\bC$, illustrated in Figure~\ref{fig:bounds}. 
As a function of $n$, the Bukh--Cox bound for $\bC^5$ is only greater than or equal to the Welch-Rankin bound for $n \in [6,7.7912\ldots]$.  
At $n=6$, the two bounds are equal and are saturated by a simplex.  
Thus, in the case of $\mathbb{C}^5$, the only $n\in\mathbb{N}$ for which the Bukh--Cox bound is the unique largest bound from Theorem~\ref{thm:bounds} is $n=7$.
For this value of $n$, the Bukh-Cox bound is $0.2645\ldots$ while the best known packing has coherence $0.2664\ldots$.

The orthoplex bound~\eqref{eq:orth} is the only bound in Theorem~\ref{thm:bounds} which does not depend on $n$.  
Thus, if one constructs $\Phi$ that saturates the orthoplex bound and has $n>\bZ(d,\bF)+1$, one may simply remove a vector to obtain a configuration of vectors which also saturates the orthplex bound.
An example of this phenomenon is visible in Figure~\ref{fig:bounds}.  
Namely, there exists a $\Phi\in \UN(5,30,\bC)$ which saturates the orthoplex bound, and so one may remove vectors one at a time to obtain Grassmannian frames of $29$, $28$, $27$, and $26$ vectors in $\bC^5$.  
The original $\Phi$ of $30=6\cdot 5$ vectors in $\bC^5$ is known as a \emph{maximal set of mutually unbiased bases}. 
Sets of mutually unbiased bases in $\bC^d$ have coherence $1/\sqrt{d}$ and maximal sets have $d(d+1) > \bZ(d,\bC)$ vectors.  
Such maximal sets of mutually unbiased bases are known to exist whenever $d$ is a prime power. 
See~\cite{KlRo04,GoRo09} and the references therein for constructions and an explanation of the application of mutually unbiased bases to quantum information theory.  
Other Grassmannian frames that saturate the orthoplex bound may be found in~\cite{BH15}.

The bound in~\eqref{eq:Lev} is called the \emph{Levenstein bound} or \emph{second Levenstein bound}, where the latter name emphasizes that the bound may be determined by applying Delsarte's linear programming bound to two special polynomials while the Welch--Rankin bound (the \textit{first} Levenstein bound) results from applying the linear programming bound to one special polynomial~\cite{HHM17}.  
The original proof of the Levenstein bound appears in~\cite{Lev82}.  
Note that in Figure~\ref{fig:bounds}, the Levenstein bound equals the Welch--Rankin bound when $n = \cZ(5,\bC)$ and surpasses the orthoplex bound at $n = 31$, which is a much smaller number of vectors than $48 = 2(\cZ(5,\bC)-1)$. 
In general, the orthoplex bound cannot be saturated for values of $n$ all the way up to the bound $2(\cZ(d,\bF)-1)$ listed in Theorem~\ref{thm:bounds}.
One may characterize all $\Phi$ which saturate~\eqref{eq:Welch} or~\eqref{eq:Lev} as so-called \emph{projective $t$-designs} with particular angle sets~\cite{HHM17,Wal17}.

The linear programming approach to proving bounds on spherical codes was developed by Delsarte and coauthors~\cite{delsarte1977} based on similar work by Delsarte on codes with a finite alphabet~\cite{Del72,Del73} and generalized by Kabatyansky and Levenshtein~\cite{KaLe78}. 
Such an approach can be used to prove the Bukh--Cox, Welch--Rankin, and Levenstein bounds.  
Three-point bounds via semidefinite programming would likely produce even better bounds~\cite{BV2008,dLV15}. 

There has been considerably less work in proving the optimality of configurations that do not saturate one of the bounds in Theorem~\ref{thm:bounds}.
Notice that optimally packing points in $\bR\bP^1\cong S^1$ is a simple exercise in the pigeonhole principle~\cite{BKo06}.  
By contrast, optimally packing points in $\bC\bP^1\cong S^2$ is equivalent to the (hard) Tammes problem, and to date, optimality has only been characterized for $n\leq 14$ and $n=24$ (see~\cite{toth1949densest,robinson1961arrangement,schutte1951kugel,danzer1986finite,musin2012strong,musin2015tammes}).
Most of these proofs exploit properties of $S^2$ to isolate necessary combinatorial features of the \textit{contact graph} of the optimal spherical code, that is, the graph whose vertex set is the set of points being packed, with two points sharing an edge if their distance equals the minimum distance of the packing.
These features can be used to isolate several contact graph candidates, most of which can then be ruled out by (spherical) geometric arguments.
Similar techniques have been leveraged to find the optimal packing of $n=5$ points in $\bR\bP^2$ (see~\cite{Toth65,BKo06}), of $n=8$ points in $\bR\bP^2$ (see~\cite{MiPa19}), and of $n=6$ points in $\bR\bP^3$ (see~\cite{FJM18}); in this last case, techniques from spherical geometry are no longer applicable, and so the authors resorted to techniques from algebraic geometry.


\section{Game of Sloanes and Website}\label{sec:games}

The primary goal of this paper is to announce the new website~\cite{GameofSloanes}, which will be live by the SPIE Optics and Photonics Conference.
The website will be a repository for the best known packings in complex projective space, much like Neil Sloane's site~\cite{Sloane} is the standard for packings in real projective space.    
The putatively optimal packings will be listed like a leaderboard, gamifying the hunt for optimal complex Grassmannian frames.  
In honor of Neil Sloane, we are calling this game the \textit{Game of Sloanes}.

To initiate the Game of Sloanes, we scraped Neil Sloane's website on real line packings~\cite{Sloane} (considering these are also complex line packings) and his website on spherical codes~\cite{Sloane2} (as these yield packings in $\bC\bP^1$).
We then used a handful of algorithms to systematically find better packings for parameters in the range\footnote{We are also interested in packings beyond this range, so please submit those as well!} $3 \leq d \leq 7$ and $d+2 \leq n \leq 49$.
What seemed to work best was to repeatedly run a slightly modified version of the Matlab code found on~\cite{CodebookSite}, which implements the algorithm in~\cite{MeDa14} that is based on sequential smooth optimization on the Grassmannian manifold and the use of smooth penalty functions.
The resulting vector configurations were then tweaked by simple gradient descent or an alternating projection algorithm related to the approach in~\cite{TDHS05}.
We also tinkered with the phases in the Gram matrix in an attempt to limit them to a small set.

In addition to listing the best known coherence for each $d$ and $n$, the leaderboard also contains the maximum of the bounds in Theorem~\ref{thm:bounds} corresponding to those parameters; this is provided in the column titled ``Lower Bound.'' 
The column named ``Creator Notes'' contains one of three types of information:
\begin{itemize}
\item
the initials of the person who submitted the packing (e.g., \texttt{ejk} for Emily J.\ King),
\item
a note that indicates the construction (e.g., \texttt{etf} for equiangular tight frame), or
\item
\texttt{AUTO}, which indicates that the best known packing can be obtained by removing a vector from the best known packing in the same dimension but with one more vector.
\end{itemize}
For the reader's benefit, the website will also provide the Matlab code we used to compute numerical packings, as well as the packings themselves (as Matlab files).

There has been some success in leveraging certain mathematical structure to ``exactify'' putatively optimal packings that were numerically calculated~\cite{ACFW18,FJM18,MiPa19}.  
It is our hope that once more numerical packings are publicly available, more exact constructions will be found.  
As an example, we propose two putatively optimal configurations that enjoy closed-form descriptions.

\begin{conj}
The frames of $8$ vectors in $\bC^3$ that result from removing a single vector from any of the known equiangular tight frames of $9$ vectors in $\bC^3$ (see, e.g,~\cite{DBBA2013}) are Grassmannian frames.
\end{conj}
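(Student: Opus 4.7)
For $d = 3$ and $n = 8$, none of the four bounds in Theorem~\ref{thm:bounds} meets the conjectured optimum $\mu = 1/2$: the Welch--Rankin bound gives only $\sqrt{5/21} \approx 0.488$, the Bukh--Cox bound gives about $0.407$, and the orthoplex and Levenstein bounds require $n > \cZ(3,\bC) = 9$ and so do not apply. The value $\mu = 1/2$ is achieved by removing any vector from a $9$-vector equiangular tight frame (ETF) in $\bC^3$, so the substance of the conjecture is the matching lower bound $\mu(\Phi) \geq 1/2$ for every $\Phi \in \UN(3,8,\bC)$.

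\textbf{Main strategy (extension to an ETF).} Argue by contradiction: suppose $\Phi = (\varphi_j)_{j=1}^8 \in \UN(3,8,\bC)$ satisfies $\mu(\Phi) < 1/2$. The goal is to produce a ninth unit vector $\psi \in \bC^3$ with $\absip{\psi}{\varphi_j} \leq 1/2$ for every $j$. Then $\Phi \cup \{\psi\} \in \UN(3,9,\bC)$ has coherence at most $1/2$, which by~\eqref{eq:Welch} applied to $n = 9$ must equal $1/2$ exactly, so $\Phi \cup \{\psi\}$ is an ETF. But equiangularity forces $\absip{\varphi_j}{\varphi_k} = 1/2$ for all $j \neq k$ in $\Phi$, contradicting $\mu(\Phi) < 1/2$. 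Equivalently, in $\bC\bP^2$ with the Fubini--Study metric, the forbidden set $F_j = \{[\psi] : \absip{\psi}{\varphi_j} > 1/2\}$ is the open ball of radius $\pi/3$ around $[\varphi_j]$, and one must show that whenever the $8$ centers are pairwise at Fubini--Study distance $> \pi/3$, the union $\bigcup_{j=1}^8 F_j$ is a proper subset of $\bC\bP^2$.

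\textbf{Main obstacle.} This covering statement is the heart of the difficulty, and the cheap approaches fail. A volume bound is useless: each ball has Fubini--Study volume fraction $\sin^4(\pi/3) = 9/16$ of $\bC\bP^2$, so the total covers up to $8 \cdot 9/16 = 9/2$ times the space. Worse, if $\Phi$ happens to be tight then $\sum_{j=1}^8 \absip{\psi}{\varphi_j}^2 = 8/3$ for every unit $\psi$, which forces $\max_j \absip{\psi}{\varphi_j}^2 \geq 1/3 > 1/4$ for every $\psi$ and rules out any admissible extension. The extension step therefore must be preceded by a separate argument that no tight frame of $8$ unit vectors in $\bC^3$ has coherence strictly less than $1/2$---a statement strictly stronger than the (believed but unproven) non-existence of an ETF of $8$ vectors in $\bC^3$, since a non-equiangular tight frame could a priori achieve any coherence in $(\sqrt{5/21}, 1/2)$. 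This is the principal hurdle. Assuming the tight case is handled, producing $\psi$ in the non-tight regime should be feasible via a continuity/compactness argument: the map $\psi \mapsto \max_j \absip{\psi}{\varphi_j}^2$ is continuous on the compact space $\bC\bP^2$, with a strict gap $1/4 - \mu(\Phi)^2 > 0$ between the forbidden threshold and the attained coherence, suggesting enough slack to slide $\psi$ into $\bigcap_j F_j^c$.

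\textbf{Fallback approaches.} If the extension route cannot be pushed through, two alternatives merit exploration. First, strengthen the linear programming framework behind Theorem~\ref{thm:bounds} via a three-point semidefinite program on $\bC\bP^2$ in the spirit of~\cite{BV2008,dLV15}, tailored to $(d,n) = (3,8)$, aiming to lift the Delsarte-type bound from $\sqrt{5/21}$ up to $1/2$. Second, pursue an algebraic-geometric argument along the lines of~\cite{FJM18,MiPa19}: parameterize $8$-vector configurations modulo the $U(3)$-action, exploit the combinatorial structure of the conjectured contact graph (which is the complete graph $K_8$ at the ETF-minus-one optimum, since every off-diagonal Gram entry has magnitude $1/2$), set up the resulting polynomial critical-point equations for the coherence functional, and classify all critical values to show that $1/2$ is the smallest. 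This latter route avoids the tightness obstruction entirely, but at the cost of substantial computer-algebra work.
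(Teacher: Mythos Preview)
The statement you are addressing is a \emph{Conjecture} in the paper, not a theorem; the paper offers no proof and explicitly leaves it open. The only supporting remark the authors add is that, by Gr\"obner basis calculations~\cite{Szo14}, no equiangular tight frame of $8$ vectors exists in $\bC^3$, so at least the Welch--Rankin bound $\sqrt{5/21}$ cannot be met. There is therefore no ``paper's own proof'' to compare your proposal against.

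Your write-up is a strategy sketch, not a proof, and you are candid about this. The extension-to-an-ETF idea is logically sound in the direction you state: if $\mu(\Phi)<1/2$ and a suitable $\psi$ exists, the enlarged system would saturate Welch for $n=9$, forcing equiangularity and hence a contradiction. You also correctly isolate the fatal obstruction when $\Phi$ is tight: the identity $\sum_j\absip{\psi}{\varphi_j}^2=8/3$ forces $\max_j\absip{\psi}{\varphi_j}^2\geq 1/3>1/4$, so no admissible $\psi$ can exist. What is missing, however, is more than just the tight case. Even for non-tight $\Phi$ you give no argument that $\bigcup_j F_j\neq\bC\bP^2$; the phrase ``suggesting enough slack to slide $\psi$'' is not a proof, and the continuity/compactness remark does not produce the required point. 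Moreover, ruling out all FUNTFs in $\UN(3,8,\bC)$ with coherence below $1/2$ is, as you note, strictly harder than the known non-existence of an $8$-vector ETF in $\bC^3$, and you offer no mechanism for it. Your fallback proposals (three-point SDP bounds \`a la~\cite{BV2008,dLV15}; contact-graph plus critical-point analysis \`a la~\cite{FJM18,MiPa19}) are reasonable research directions, but they remain programmatic. In short: the proposal is an honest outline of possible attacks with the chief obstacles correctly flagged, but no step is actually carried through, and the paper itself does not resolve the conjecture either.
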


We note that it has been shown via Gr\"obner basis calculations that $8$ vectors in $\bC^3$ cannot saturate the Welch--Rankin bound~\cite{Szo14}.  
This is the only proof of non-existence of a complex equiangular tight frame that does not leverage the bound $n \leq \min\{ \cZ(d,\bC) , \cZ(n-d,\bC)\}$.

\begin{conj}
The following is a Grassmannian frame of $5$ vectors in $\mathbb{C}^3$:
\[
\Phi = \left[ \begin{array}{ccccc} a  & b & b & c & c \\  b  & a & b & cw & cw^2 \\   b  & b & a & cw^2 & cw \end{array} \right],
\quad
a = \frac{\sqrt{13}+\sqrt{2+\sqrt{13}}-1}{3\sqrt{3}},
\quad
b = \sqrt{\frac{1-a^2}{2}},
\quad
c = \frac{1}{\sqrt{3}},
\quad
w=e^{2\pi i/3}.
\]
\end{conj}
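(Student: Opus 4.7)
The plan is to proceed in two stages: first verify that the given configuration is equiangular (with a single orthogonal pair) and compute its coherence $\mu_0$, and then establish that no $\Psi \in \UN(3,5,\bC)$ achieves coherence less than $\mu_0$.

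\emph{Verification.} Each column has unit norm because $a^2 + 2b^2 = 1$ and $3c^2 = 1$. By the cyclic/reflective structure of the construction, the ten Gram entries collapse to three values in modulus: for $i \neq j$ in $\{1,2,3\}$ one computes $\absip{\varphi_i}{\varphi_j} = |2ab+b^2|$; for $i\in\{1,2,3\}$ and $j\in\{4,5\}$, collecting terms using $1+w+w^2 = 0$ yields $\absip{\varphi_i}{\varphi_j} = c\,|a-b|$; and $\ip{\varphi_4}{\varphi_5} = c^2(1+w+w^2) = 0$. The stated closed form for $a$ is precisely the relevant real root of $3t^4+4t^3-8t^2-12t+1$ in $t = \sqrt{3}\,a$, which is the resultant of the norm condition $a^2+2b^2=1$ with the equiangularity condition $2ab+b^2 = c(a-b)$. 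Hence $\Phi$ is equiangular except for one orthogonal pair, with common coherence $\mu_0 = 2ab+b^2 \approx 0.4343$.

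\emph{Global optimality.} None of the bounds in Theorem~\ref{thm:bounds} settles the question: the Welch--Rankin bound is $1/\sqrt{6} \approx 0.4082$, the Bukh--Cox bound evaluates to $4/(1+5\sqrt{3}) \approx 0.4142$, and the orthoplex and Levenstein bounds do not apply in the regime $n=d+2$. I would therefore follow the program of~\cite{FJM18,MiPa19}: enumerate all candidate contact graphs on five vertices, solve the first-order critical-point system for each, and compare the resulting coherences to $\mu_0$. Proposition~\ref{prop.grass frames are frames} supplies one necessary combinatorial constraint---some vertex's contact neighborhood must span $\bC^3$---and additional first-order conditions arise because each $\varphi_j$ must lie in a local equilibrium with respect to its contact neighbors. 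The conjectured frame is stabilized by the $S_3$ subgroup of $U(3)$ that simultaneously permutes coordinates of $\bC^3$ and swaps $\varphi_4 \leftrightarrow \varphi_5$ (with complex conjugation); one can first restrict to the $S_3$-invariant stratum of configuration space, where the parameter space collapses to essentially the single real parameter $a$, verify that the given value is the unique minimum of $\mu$ there, and then argue via a second-order/Hessian analysis that any symmetry-broken critical point strictly increases coherence.

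\emph{Main obstacle.} As in the real-case analogues of~\cite{FJM18,MiPa19}, the decisive difficulty is expected to be the algebraic-geometric step: eliminating variables in the critical-point ideal associated with each admissible contact graph, separating real from complex roots, and certifying positive semidefiniteness of rank $3$ for the resulting Gram matrices. A secondary hurdle is that the spanning criterion of Proposition~\ref{prop.grass frames are frames} alone admits too many contact graphs on five vertices; pruning this list to a manageable number of Gröbner basis computations will likely require either an SDP-improved lower bound tailored to $(d,n)=(3,5)$, or a sharper local-perturbation argument that extracts additional combinatorial restrictions from the first-order optimality conditions.
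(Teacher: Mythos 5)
The statement you were asked to prove is posed in the paper as a \emph{conjecture}: the authors give no proof, and there is therefore no ``paper proof'' to compare against. Your verification stage is correct and consistent with what the authors implicitly assert: the columns are unit vectors since $a^2+2b^2=1$ and $3c^2=1$; the Gram entries collapse to the three moduli $|2ab+b^2|$, $c|a-b|$, and $0$ (the orthogonal pair being $\varphi_4,\varphi_5$, using $1+w+w^2=0$); and eliminating $b$ from $a^2+2b^2=1$ and $2ab+b^2=c(a-b)$ does yield the quartic $3t^4+4t^3-8t^2-12t+1=0$ in $t=\sqrt{3}\,a$, of which the stated $a$ is the relevant root, giving coherence $\mu_0\approx 0.4343$. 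Your observation that none of the bounds of Theorem~\ref{thm:bounds} certifies optimality is also correct: \eqref{eq:Welch} gives $1/\sqrt{6}\approx 0.4082$ and \eqref{eq:BC} gives $4/(1+5\sqrt{3})\approx 0.414$, both strictly below $\mu_0$, while \eqref{eq:orth} and \eqref{eq:Lev} require $n>\cZ(3,\bC)=9$ and so do not apply at $n=5$. (One small simplification: the swap $\varphi_4\leftrightarrow\varphi_5$ is realized by a plain transposition of the second and third coordinates, so no complex conjugation is needed to exhibit the $S_3$ symmetry.)

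However, the second stage of your proposal is a research program, not a proof, and you say so yourself. Everything that would actually establish global optimality --- the enumeration and pruning of admissible contact graphs on five vertices, the solution and certification of the critical-point ideals, and the Hessian analysis off the symmetric stratum --- is deferred, with the decisive obstacles explicitly acknowledged. As written, your argument shows only that $\Phi$ is a well-defined two-distance configuration whose coherence exceeds all known lower bounds; it does not show that no $\Psi\in\UN(3,5,\bC)$ does better, which is exactly the content of the conjecture. This is a correct diagnosis of why the paper lists the statement as a conjecture rather than a theorem (the analogous real cases in \cite{FJM18,MiPa19} required exactly the kind of contact-graph and algebraic-geometry machinery you outline), but it means the statement is not proved by your submission, and no amount of polishing the verification stage will close that gap.
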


In the future, we would also like to post putatively optimal packings for related packing problems.
Packings of real subspaces of dimension greater than $1$ can be found on Sloane's original website~\cite{Sloane}.  
The website~\cite{SomeCompSite} also has a few packings of subspaces of various dimensions in complex space; however, except for the known equiangular tight frames, our line packings improve on the configurations on that site.  
The algorithm in~\cite{MeDa14} already includes the general subspace case, and the algorithm in~\cite{TDHS05} was generalized to handle subspace packings in~\cite{DHST08}.  
There are known generalizations of coherence, and for lower bounds, there are natural generalizations of the Welch--Rankin and orthoplex bounds to higher-dimensional subspaces.

\begin{open}
Generalize the Levenstein and Bukh--Cox bounds to general subspace packings.
\end{open}

Finally, $1$-Grassmannian frames are finite unit norm tight frames with the best possible coherence~\cite{HaCa17}:
\[
\Phi \in \argmin\left\{ \mu(\Psi) \, : \, \Psi \in \FUNTF(d,n,\bF) \right\}.
\]
Sometimes the $\Phi$ which satisfy~\eqref{eq:Grassman} are tight and therefore $1$-Grassmannian frames.  
For example, equiangular tight frames and maximal sets of mutually unbiased bases are two examples of tight Grassmannian frames.  
The benefit of using a $1$-Grassmannian frame is that one has maximum geometric spread while still being able to compute a change of basis \`a la~\eqref{eq:tight}.

We look forward to your Game of Sloanes submissions and would appreciate your feedback!

\section*{ACKNOWLEDGMENTS}
The authors are indebted to Matt Fickus, who wrote code to scrape packing data from Neil Sloane's website.
This work was supported in part by AFOSR FA9550-18-1-0107, NSF DMS 1829955, NSF DMS 1830066, and the Simons Institute of the Theory of Computing.


\begin{thebibliography}{10}

\bibitem{BFMW13}
Bandeira, A.~S., Fickus, M., Mixon, D.~G., and Wong, P., ``The road to
  deterministic matrices with the restricted isometry property,'' {\em J.
  Fourier Anal. Appl.}~{\bf 19}(6),  1123--1149 (2013).

\bibitem{MQKF13}
Mixon, D.~G., Quinn, C.~J., Kiyavash, N., and Fickus, M., ``Fingerprinting with
  equiangular tight frames,'' {\em IEEE Trans. Inform. Theory}~{\bf 59}(3),
  1855--1865 (2013).

\bibitem{RBkSC04}
Renes, J.~M., Blume-Kohout, R., Scott, A.~J., and Caves, C.~M., ``Symmetric
  informationally complete quantum measurements,'' {\em J. Math. Phys.}~{\bf
  45}(6),  2171--2180 (2004).

\bibitem{FHS17}
Fuchs, C.~A., Hoang, M.~C., and Stacey, B.~C., ``The {SIC} question: History
  and state of play,'' {\em Axioms}~{\bf 6}(3) (2017).

\bibitem{StH03}
Strohmer, T. and Heath, Jr., R.~W., ``Grassmannian frames with applications to
  coding and communication,'' {\em Appl. Comput. Harmon. Anal.}~{\bf 14}(3),
  257--275 (2003).

\bibitem{MeDa14}
Medra, A. and Davidson, T.~N., ``Flexible codebook design for limited feedback
  systems via sequential smooth optimization on the {G}rassmannian manifold,''
  {\em IEEE Trans. Signal Process.}~{\bf 62}(5),  1305--1318 (2014).

\bibitem{Welch}
{Welch}, L., ``Lower bounds on the maximum cross correlation of signals,'' {\em
  IEEE Trans Inf Theory}~{\bf 20},  397--399 (May 1974).

\bibitem{Toth65}
Fejes~T\'{o}th, L., ``Distribution of points in the elliptic plane,'' {\em Acta
  Math. Acad. Sci. Hungar.}~{\bf 16},  437--440 (1965).

\bibitem{Tam30}
Tammes, P., {\em On the origin of number and arrangement of the places of exit
  on the surface of pollen-grains}, PhD thesis (1930).
\newblock Relation: https://www.rug.nl/ Rights: De Bussy.

\bibitem{GrassPack}
Conway, J.~H., Hardin, R.~H., and Sloane, N. J.~A., ``Packing lines, planes,
  etc.: packings in {G}rassmannian spaces,'' {\em Experiment. Math.}~{\bf
  5}(2),  139--159 (1996).

\bibitem{Bod07}
Bodmann, B.~G., ``Optimal linear transmission by loss-insensitive packet
  encoding,'' {\em Appl. Comput. Harmon. Anal.}~{\bf 22}(3),  274--285 (2007).

\bibitem{Sloane}
Sloane, N.~J.~A., ``N. {J}. {A}. {S}loane: How to pack lines, planes,
  $3$-spaces, etc..'' \url{http://neilsloane.com/grass/}.

\bibitem{GameofSloanes}
Jasper, J., King, E.~J., and Mixon, D.~G., ``Game of sloanes.''
  \url{https://www.math.colostate.edu/~king/GameofSloanes.html}.

\bibitem{delsarte1977}
Delsarte, P., Goethals, J.~M., and Seidel, J.~J., ``Spherical codes and
  designs,'' {\em Geometriae Dedicata}~{\bf 6}(3),  363--388 (1977).

\bibitem{musin2009}
Musin, O.~R., ``Spherical two-distance sets,'' {\em J. Combin. Theory Ser.
  A}~{\bf 116}(4),  988--995 (2009).

\bibitem{BKo06}
Benedetto, J.~J. and Kolesar, J.~D., ``Geometric properties of {G}rassmannian
  frames for $\mathbb{R}^2$ and $\mathbb{R}^3$,'' {\em EURASIP J. Appl. Signal
  Process.}~{\bf 49850} (2006).

\bibitem{King15}
King, E.~J., ``Algebraic and geometric spread in finite frames,'' in [{\em
  Wavelets and Sparsity XVI}{\nolinebreak\hspace{0.1em}]},  Papadakis, M. and
  Goyal, V.~K., eds., {\em Society of Photo-Optical Instrumentation Engineers
  (SPIE) Conference Series} {\bf 9597} (2015).

\bibitem{Toth49}
Fejes~T\'{o}th, L., ``On the densest packing of spherical caps,'' {\em Amer.
  Math. Monthly}~{\bf 56},  330--331 (1949).

\bibitem{ConwaySloane98}
Conway, J.~H. and Sloane, N. J.~A.,  [{\em Sphere packings, lattices and
  groups}{\nolinebreak\hspace{0.1em}]}, vol.~290 of {\em Grundlehren der
  Mathematischen Wissenschaften [Fundamental Principles of Mathematical
  Sciences]}, Springer-Verlag, New York, third~ed. (1999).
\newblock With additional contributions by E. Bannai, R. E. Borcherds, J.
  Leech, S. P. Norton, A. M. Odlyzko, R. A. Parker, L. Queen and B. B. Venkov.

\bibitem{FJM18}
Fickus, M., Jasper, J., and Mixon, D.~G., ``Packings in real projective
  spaces,'' {\em SIAM J. Appl. Algebra Geom.}~{\bf 2}(3),  377--409 (2018).

\bibitem{Glaz19}
Glazyrin, A., ``Moments of isotropic measures and optimal projective codes,''
  arXiv:1904.11159 (2019).

\bibitem{VaSei66}
Van~Lint, J.~H. and Seidel, J.~J., ``Equilateral point sets in elliptic
  geometry,'' {\em Indagationes Mathematicae, Proc. Koninkl. Ned. Akad.
  Wetenschap. Ser. A}~{\bf 69}(3),  335--34 (1966).

\bibitem{HaCa17}
{Haas IV}, J.~I. and Casazza, P.~G., ``On the structures of {G}rassmannian
  frames,'' in [{\em 2017 International Conference on Sampling Theory and
  Applications (SampTA)}{\nolinebreak\hspace{0.1em}]},   377--380 (July 2017).

\bibitem{Ran55}
Rankin, R.~A., ``The closest packing of spherical caps in $n$ dimensions,''
  {\em Proc Glasgow Math Assoc}~{\bf 2},  139--144 (1955).

\bibitem{Wal17}
Waldron, S., ``A sharpening of the {W}elch bounds and the existence of real and
  complex spherical {$t$}-designs,'' {\em IEEE Trans. Inform. Theory}~{\bf
  63}(11),  6849--6857 (2017).

\bibitem{Ros97}
Rosenfeld, M., ``In praise of the {G}ram matrix,'' in [{\em The mathematics of
  {P}aul {E}rd\H{o}s, {II}}{\nolinebreak\hspace{0.1em}]},  {\em Algorithms
  Combin.} {\bf 14},  318--323, Springer, Berlin (1997).

\bibitem{HHM17}
{Haas IV}, J.~I., Hammen, N., and Mixon, D.~G., ``The {L}evenstein bound for
  packings in projective spaces,'' in [{\em Wavelets and Sparsity
  XVII}{\nolinebreak\hspace{0.1em}]},  Lu, Y.~M., Ville, D. V.~D., and
  Papadakis, M., eds., {\em Society of Photo-Optical Instrumentation Engineers
  (SPIE) Conference Series} {\bf 10394} (2017).

\bibitem{MMP19}
Magsino, M., Mixon, D.~G., and Parshall, H., ``A {D}elsarte-style proof of the
  {B}ukh--{C}ox bound,'' {\em Sampl. Theory Signal Image Process.}~{\bf Special
  Issue SampTA 2019} (2019).
\newblock To appear.

\bibitem{FM15}
Fickus, M. and Mixon, D.~G., ``Tables of the existence of equiangular tight
  frames,'' arXiv:1504.00253 (2016).

\bibitem{LS1973}
Lemmens, P. W.~H. and Seidel, J.~J., ``Equiangular lines,'' {\em J.
  Algebra}~{\bf 24},  494--512 (1973).

\bibitem{STDH07}
Sustik, M.~A., Tropp, J.~A., Dhillon, I.~S., and Jr, R. W.~H., ``On the
  existence of equiangular tight frames,'' {\em Linear Algebra Appl.}~{\bf
  426},  619--635 (2007).

\bibitem{Zauner1999}
Zauner, G., {\em Quantendesigns - {G}rundz{\"u}ge einer nichtkommutativen
  {D}esigntheorie}, PhD thesis, University Wien (Austria) (1999).
\newblock English translation in International Journal of Quantum Information
  (IJQI) 9 (1), 445--507, 2011.

\bibitem{Zauner2011}
Zauner, G., ``Quantum designs: foundations of a noncommutative design theory,''
  {\em Int. J. Quantum Inf.}~{\bf 9}(1),  445--507 (2011).

\bibitem{Gill18}
Gillespie, N.~I., ``Equiangular lines, incoherent sets and quasi-symmetric
  designs,'' arXiv:1809.05739 (2018).

\bibitem{BuCo18}
Bukh, B. and Cox, C., ``Nearly orthogonal vectors and small antipodal spherical
  codes,'' arXiv:1803.02949 (2018).

\bibitem{KlRo04}
Klappenecker, A. and R\"{o}tteler, M., ``Constructions of mutually unbiased
  bases,'' in [{\em Finite fields and
  applications}{\nolinebreak\hspace{0.1em}]},  {\em Lecture Notes in Comput.
  Sci.} {\bf 2948},  137--144, Springer, Berlin (2004).

\bibitem{GoRo09}
Godsil, C. and Roy, A., ``Equiangular lines, mutually unbiased bases, and spin
  models,'' {\em European J. Combin.}~{\bf 30}(1),  246--262 (2009).

\bibitem{BH15}
Bodmann, B.~G. and Haas, J., ``Achieving the orthoplex bound and constructing
  weighted complex projective 2-designs with {S}inger sets,'' {\em Linear
  Algebra Appl.}~{\bf 511},  54--71 (2016).

\bibitem{Lev82}
Leven\v{s}te\u{\i}n, V.~I., ``Bounds on the maximal cardinality of a code with
  bounded modules of the inner product,'' {\em Soviet Math. Dokl.}~{\bf 25},
  526--531 (1982).

\bibitem{Del72}
Delsarte, P., ``Bounds for unrestricted codes, by linear programming,'' {\em
  Philips Res. Rep.}~{\bf 27},  272--289 (1972).

\bibitem{Del73}
Delsarte, P., ``An algebraic approach to the association schemes of coding
  theory,'' {\em Philips Res. Rep. Suppl.} (10),  vi+97 (1973).

\bibitem{KaLe78}
Kabatjanski\u{\i}, G.~A. and Leven\v{s}te\u{\i}n, V.~I., ``Bounds for packings
  on the sphere and in space,'' {\em Problemy Pereda\v{c}i Informacii}~{\bf
  14}(1),  3--25 (1978).

\bibitem{BV2008}
Bachoc, C. and Vallentin, F., ``New upper bounds for kissing numbers from
  semidefinite programming,'' {\em J. Amer. Math. Soc.}~{\bf 21},  909--924
  (2008).

\bibitem{dLV15}
de~Laat, D. and Vallentin, F., ``A semidefinite programming hierarchy for
  packing problems in discrete geometry,'' {\em Math. Program.}~{\bf 151}(2,
  Ser. B),  529--553 (2015).

\bibitem{toth1949densest}
Fejes~T{\'o}th, L., ``On the densest packing of spherical caps,'' {\em The
  American Mathematical Monthly}~{\bf 56}(5),  330--331 (1949).

\bibitem{robinson1961arrangement}
Robinson, R.~M., ``Arrangement of 24 points on a sphere,'' {\em Mathematische
  Annalen}~{\bf 144}(1),  17--48 (1961).

\bibitem{schutte1951kugel}
Sch{\"u}tte, K. and Van~der Waerden, B., ``Auf welcher kugel haben 5, 6, 7, 8
  oder 9 punkte mit mindestabstand eins platz?,'' {\em Mathematische
  Annalen}~{\bf 123}(1),  96--124 (1951).

\bibitem{danzer1986finite}
Danzer, L., ``Finite point-sets on ${S}^2$ with minimum distance as large as
  possible,'' {\em Discrete mathematics}~{\bf 60},  3--66 (1986).

\bibitem{musin2012strong}
Musin, O.~R. and Tarasov, A.~S., ``The strong thirteen spheres problem,'' {\em
  Discrete \& Computational Geometry}~{\bf 48}(1),  128--141 (2012).

\bibitem{musin2015tammes}
Musin, O.~R. and Tarasov, A.~S., ``The {T}ammes problem for ${N}=14$,'' {\em
  Experimental Mathematics}~{\bf 24}(4),  460--468 (2015).

\bibitem{MiPa19}
Mixon, D.~G. and Parshall, H., ``Exact line packings from numerical
  solutions,'' {\em Sampl. Theory Signal Image Process.}~{\bf Special Issue
  SampTA 2019} (2019).
\newblock To appear.

\bibitem{Sloane2}
Sloane, N.~J.~A., ``Spherical codes.'' \url{http://neilsloane.com/packings/}.

\bibitem{CodebookSite}
Medra, A. and Davidson, T.~N., ``Flexible codebook design for limited feedback
  systems.''
  \url{http://www.ece.mcmaster.ca/~davidson/pubs/Flexible_codebook_design.html}.

\bibitem{TDHS05}
Tropp, J.~A., Dhillon, I.~S., Jr., R.~W.~H., and Strohmer, T., ``Designing
  structured tight frames via alternating projection,'' {\em IEEE Trans. Info.
  Theory}~{\bf 51}(1),  188--209 (2005).

\bibitem{ACFW18}
Appleby, M., Chien, T.-Y., Flammia, S., and Waldron, S., ``Constructing exact
  symmetric informationally complete measurements from numerical solutions,''
  {\em J. Phys. A}~{\bf 51}(16) (2018).

\bibitem{DBBA2013}
Dang, H.~B., Blanchfield, K., Bengtsson, I., and Appleby, D.~M., ``Linear
  dependencies in {W}eyl--{H}eisenberg orbits,'' {\em Quantum Information
  Processing}~{\bf 12},  3449--3475 (Nov 2013).

\bibitem{Szo14}
Sz{\"o}ll{\H{o}}si, F., ``All complex equiangular tight frames in dimension
  $3$,'' arXiv:1402.6429 (2014).

\bibitem{SomeCompSite}
Love, D.~J., ``Grassmannian subspace packing.''
  \url{https://engineering.purdue.edu/\~djlove/grass.html}.

\bibitem{DHST08}
Dhillon, I.~S., Heath, Jr., R.~W., Strohmer, T., and Tropp, J.~A.,
  ``Constructing packings in {G}rassmannian manifolds via alternating
  projection,'' {\em Experiment. Math.}~{\bf 17}(1),  9--35 (2008).

\end{thebibliography}

\end{document}